\DeclareMathOperator*{\esssup}{ess\,sup}
\newtheorem{theorem}{Theorem}[section]
\newtheorem{lemma}[theorem]{Lemma}
\newtheorem{remark}{Remark}[section]
\numberwithin{equation}{section}
\begin{document}

\title{Iterative approximations of periodic trajectories for nonlinear systems with discontinuous inputs}

\author{Alexander~Zuyev$^{1,2}$, Peter~Benner}
\affil[1]{Max Planck Institute for Dynamics of Complex Technical Systems, Magdeburg, Germany}
\affil[2]{Institute of Applied Mathematics and Mechanics, National Academy of Sciences of Ukraine}


\keywords{nonlinear control system, periodic boundary value problem, discontinuous input function, Carath\'eodory solution, iterative scheme, nonlinear chemical reaction model}

\msc{93C15, 34B15, 34A36, 47J25, 65L10, 92E20}

\abstract{
Nonlinear control-affine systems described by ordinary differential equations with bounded measurable input functions are considered.
The {
solvability of general boundary value problems} for these systems is formulated in the sense of Carathéodory solutions.
It is shown that, under the dominant linearization assumption, the {considered class of boundary value problems} admits a unique solution for any admissible control. {These solutions} can be obtained as the limit of the proposed simple iterative scheme and, {in the case of periodic boundary conditions, via the developed Newton-type schemes}.
Under additional technical assumptions, sufficient contraction conditions of the corresponding generating operators are derived analytically.
The proposed iterative approach is applied to compute periodic solutions of a realistic chemical reaction model with discontinuous control inputs.
}

 \novelty{
\begin{itemize}
\item Existence and uniqueness of periodic solutions for nonlinear control systems with general measurable input functions
\item Iterative schemes and numerical implementation of an algorithm for approximating the periodic solutions of discontinuous systems
\item Analytical sufficient conditions for the convergence of the simple iteration and Newton-type methods
\item Approximate periodic trajectories of a controlled chemical reaction model under arbitrary switching strategies
\end{itemize}}

\maketitle

\section{Introduction}
\vskip-1ex
Periodic optimal control problems have been attracting considerable interest in the mathematical literature~\cite{gilbert1977optimal,colonius2006optimal,bayen2019optimal,guilmeau2022singular} and play a significant role in a variety of emerging engineering applications (see, e.g.,~\cite{pittelkau1993optimal,varigonda2008optimal,peng2011optimal,bernard2023optimal,elgindy2023new} and references therein).
Our current paper is motivated by the previous analysis of nonlinear optimization problems with isoperimetric constraints~\cite{zuyev2017isoperimetric,benner2019periodic}, where the main goal is to optimize the cost functional on periodic trajectories under discontinuous control strategies.
A crucial ingredient for achieving this goal is the characterization of the set of periodic solutions for a nonlinear control system with bang-bang inputs.

In the paper~\cite{benner2019periodic}, the Chen--Fliess series have been exploited for constructing the $\tau$-periodic solutions of nonlinear control-affine systems with switching.
This approach allows representing the initial data of the considered system as a solution of nonlinear algebraic equations, whose order depends on the remainder of the Chen--Fliess series for small $\tau>0$.
{Note that the solutions to control-affine systems can be approximated using Lie-algebraic techniques developed for parameterized systems and systems with jumps, as described in~\cite{treanctua2013weak,treanctua2017local}.}
The construction of~\cite{benner2019periodic} has also been extended to a class of nonlinear chemical reaction models in non-affine form in~\cite{benner2021analysis}.
{It should be emphasized} that the convergence of the Chen--Fliess expansion is not guaranteed over an arbitrary time interval $[0,\tau]$,
even for systems with analytic vector fields.
{Therefore, alternative approximation techniques should be developed to design periodic solutions for nonlinear control systems with large periods.}
We note that the results of~\cite{zuyev2017isoperimetric,benner2019periodic,benner2021analysis} have been applied to nonlinear chemical reaction models in the case of small periods of the control functions.
In the recent paper~\cite{JOTA2024}, it was shown that the optimal controls are bang-bang for an isoperimetric optimization problem described by a nonlinear hyperbolic partial differential equation with periodic boundary conditions. The optimization of chemical reactions using periodic input modulations is of great importance in modern chemical engineering~\cite{felischak2021}, as it supports to develop technology that increases the yield of reaction products as compared to steady operation.
For different classes of nonlinear ordinary differential equations with regular right-hand sides,
the problems of existence and approximation of periodic solutions have been studied by
the method of generalized quasilinearization~\cite{lakshmikantham1996methods},
comparison techniques~\cite{vseda1992periodic},
fixed point theory~\cite{yao2005existence},
reproducing kernel methods~\cite{al2016numerical}, and other techniques.
The presented list of references does not pretend to be exhaustive.

To the best of our knowledge, a complete characterization of periodic trajectories for a given nonlinear control system with discontinuous inputs remains an open problem.
The purpose of this paper is to provide such a characterization, along with efficient computational methods for approximating solutions to the boundary value problem:
\begin{eqnarray}
&\dot x(t) = A x(t) + g(x(t)) + u(t),\;\; x\in D\subset {\mathbb R}^n,\; t\in [0,\tau],
\label{sys_switchings}\\
\label{BC}
&x(\tau)=x(0),
\end{eqnarray}
where $x=(x_1,...,x_n)^\top$, $D$ is a domain containing the origin $0\in\mathbb R^n$, $A$ is a constant $n\times n$ matrix,
$g(\cdot)\in C^1\left (D; {\mathbb R}^n \right)$ represents the nonlinearity, and $u(\cdot)\in L^\infty \left( [0,\tau]; {\mathbb R}^n \right)$ is an arbitrary control function.

As the function $u(t)$ is allowed to be discontinuous, we treat the solutions of~\eqref{sys_switchings} in the sense of Carath\'eodory~\cite[Ch.~1]{filippov2013differential} as solutions of the integral equation
$$
{x(t) = x_0 + \int_0^t \left[ A x(s)+ g(x(s)) + u(s)  \right] \,ds,\; x_0= x(0).}
$$
The existence and uniqueness results for solutions to differential equations, with the right-hand sides being continuous in $x$ and discontinuous in $t$ under the Carathéodory conditions, are summarized in~\cite[Ch.~1]{filippov2013differential}.
By using the variation of constants method and introducing the matrix exponential $e^{tA}$, we can rewrite the above integral equation in the form
\begin{equation}
x(t) = e^{tA} x_0 + \int_0^t e^{(t-s)A} \left[g(x(s))+u(s) \right] \,ds.
 \label{intexpA}
\end{equation}
Let $x(t)$ be a solution of~\eqref{intexpA} with some $u(t)$ on $[0,\tau]$ such that $x(\tau)=x(0)$,
then $x(t)$ can be extended to the $\tau$-periodic function $\tilde x(t)$, defined for all $t\in \mathbb R$.
If, moreover, the function $\tilde u(t)$ is $\tau$-periodic and $\tilde u(t)=u(t)$ for all $t\in[0,\tau)$,
then $\tilde x(t)$ satisfies~\eqref{intexpA} with the input $\tilde u(t)$ for all $t\in \mathbb R$.
Because of this simple fact, we will refer to the Carath\'eodory solutions of boundary value problem~\eqref{sys_switchings}--\eqref{BC} as periodic solutions.
If $x(t)$ is a solution of~\eqref{sys_switchings}--\eqref{BC}, then formula~\eqref{intexpA} allows to represent its initial value $x_0=x(0)=x(\tau)$:
\begin{equation}
x_0 = (e^{-\tau A}-I)^{-1} \int_0^\tau e^{-sA} \left[g(x(s))+u(s) \right] \,ds,
 \label{x0}
\end{equation}
provided that
$$
\det \left(e^{-\tau A}-I\right)\neq 0,\eqno(A1)
$$
where $I$ is the identity matrix.
An immediate consequence of equation~\eqref{x0} {\em for the case of linear systems on $D=\mathbb R^n$} is that the system~\eqref{sys_switchings} with $g(x)\equiv 0$ and any $u(\cdot)\in  L^\infty \left( [0,\tau]; {\mathbb R}^n \right)$
{\em has a unique periodic solution} $x(t)$ on $t\in [0,\tau]$, and its initial data $x_0=x(0)$ is defined by~\eqref{x0} if assumption~$(A1)$ holds.
The existence of periodic solutions has been studied {\em for weakly nonlinear} boundary value problems with piecewise-constant right-hand sides in~\cite{benner2023periodic}. That paper develops perturbation analysis techniques for nonlinear differential equations with switching under periodic boundary conditions.
A modified iterative scheme is proposed there for constructing approximate periodic solutions. Note that the contribution of~\cite{benner2023periodic} is limited to systems of ordinary differential equations with a small parameter, and future studies
are needed for systems with general nonlinearities and merely measurable controls.

The rest of this paper is organized as follows.
In Section~\ref{iterative}, a simple iteration method is adapted to construct solutions of~\eqref{sys_switchings} under
{general affine boundary conditions and} fairly mild assumptions on $u(t)$ and $g(x)$. The convergence of this scheme is justified in terms of the growth rate of the matrix exponential $e^{tA}$ and the Lipschitz constant of $g(x)$ (Theorem~\ref{convtheorem}).
{This novel contribution encompasses the existence of solutions not only under two-point boundary conditions (including periodic ones), but also under abstract boundary conditions.}
The operator formulation of the periodic boundary value problem{~\eqref{sys_switchings}--\eqref{BC}} is exploited in Section~\ref{newton} to derive Newton's method.
Sufficient convergence conditions for Newton's iterations are explicitly stated in Theorem~\ref{thm_NK}, with the proof provided in the Appendix.
The proposed iterative approach is applied to a nonlinear chemical reaction model in Section~\ref{application}. The final conclusions and perspectives are outlined in Section~\ref{conc}.

{\em Notations}.
For further analysis, let $X=C\left ( [0,\tau]; \mathbb R^n \right)$ denote the Banach space of all continuous functions $x(t)$ from $[0,\tau]$ to $\mathbb R^n$, equipped with the norm
$
\|x(\cdot)\|_X := \sup_{t \in [0,\tau]} \|x(t)\|
$,
where $\|\xi\|$ is the Euclidean norm of a column vector $\xi\in \mathbb R^n$.
{We also introduce the subspace $\mathring X = \{x\in X\,|\, x(0)=0\}$ of $X$
and, for brevity, we will}
omit the subscript $X$ referring to the norm of elements of $X$.
The Euclidean norm induces the $2$-norm $\|A\|:=\sup_{\|\xi\|\le 1} \|A\xi\|$ of a matrix $A$, which will be used throughout the text.
The space of bounded linear operators, acting from $X$ {to a normed vector space $Y$}, is denoted by ${\mathcal L}(X,{Y})$.
{
We will refer to an operator ${\cal B}\in {\mathcal L}(X,{\mathbb R}^n)$ as a bounded linear vector functional.
The value of a vector functional ${\cal B}:X\to {\mathbb R}^n$ at $x\in X$ is denoted by ${\cal B}(x)$,
and by ${\cal B}(x(t))$ when $x(t)$ is explicitly defined within the corresponding expression.
}
We will also use the notations $X_D=C\left ( [0,\tau]; D \right)$ and $X_{D'}=C\left ( [0,\tau]; D' \right)$ when $D'\subset D$ is a closed domain. The Jacobian matrix of a function $g\in C^1(D; {\mathbb R}^n)$ will be denoted as $g'(\xi)=\frac{\partial g(\xi)}{\partial
\xi}$. For a function $x\in X$, $g'(x(s))$ stands for the substitution of $\xi=x(s)$ in $g'(\xi)$.

\section{Simple iteration method}\label{iterative}
{
We consider boundary condition~\eqref{BC} as a particular case of general affine conditions:
\begin{equation}\label{affine_BC}
{\cal B} (x) = \beta,
\end{equation}
where ${\cal B}:X\to {\mathbb R}^n$ is a bounded linear functional and $\beta \in {\mathbb R}^n$ is a given vector.
It is easy to see that~\eqref{BC} can be written in the form~\eqref{affine_BC} with
\begin{equation}\label{BC_periodic_abstract}
{\cal B} (x) = x(\tau)-x(0),\; \beta = 0.
\end{equation}
For a vector function $\xi(t)=e^{tA} \xi_0$ with $\xi_0\in\mathbb R^n$, the value of ${\cal B} (\xi)$ is represented as ${\cal B} (\xi)={\cal B}_\tau \xi_0$, under the convention that ${\cal B}_\tau = {\cal B}(e^{tA})$ denotes the corresponding constant $n\times n$-matrix.
Assuming that ${\cal B}_\tau$ is nonsingular,}
consider the following nonlinear operator ${\cal F}: X_D \to X $:
\begin{equation}
{\cal F} : x \mapsto {\cal F}(x)(t) = e^{tA} c(x) + \int_0^t e^{(t-s)A} \left[g(x(s))+u(s) \right] \,ds,
\label{F_op}
\end{equation}
where the vector functional $c:X_D\to {\mathbb R}^n$ is defined by
\begin{equation}
c(x) = {{\cal B}_\tau^{-1} \beta - {\cal B}_\tau^{-1} {\cal B} \left( \int_0^t e^{(t-s)A} \left[g(x(s))+u(s)\right] \,ds\right)}.
\label{c_fun}
\end{equation}
{We observe that the argument of $\cal B$ vanishes at $t=0$ in~\eqref{c_fun} and,
for further analysis, we define the norm of the restriction of $\cal B$ to $\mathring X$ as
$\|{\cal B}\|_0:=\sup_{x\in \mathring X,\|x\|\le 1} \|{\cal B}(x)\|$.
}
%
If an initial function $x^{(0)}\in  X_D$ is given, we generate the sequence of functions $x^{(k)}={\cal F}(x^{(k-1)})$ for $k=1,2,...$ .
This sequence is well-defined, particularly when $X_D=X$ (i.e., $D=\mathbb R^n$). If $D\neq \mathbb R^n$, we consequently assume that  ${\cal F}(x^{(k)})(t)\in D$ for all $t\in [0,\tau]$, $k=1,2,... $ .
Below, we propose sufficient conditions for the convergence of $x^{(k)}(t)$.

\begin{theorem}\label{convtheorem}{\em
Assume that {$\det ({\cal B}_\tau)\neq 0$,} and that there exist constants $L\ge 0$, $M\ge 1$, $\omega>0$,
along with a closed convex domain $D'\subset D$, such that:
$$
\left\| g'(\xi) \right\|\le L,\; \|e^{tA}\| \le M e^{\omega \vert t \vert }\; \text{for all}\; \xi\in D,\; t\in [-\tau,\tau],
\eqno(A2)
$$
\vskip-2ex
$$
{\cal F} (x) \in X_{D'}\;\; \text{for each}\;\; x \in X_{D'},
\eqno(A3)
$$
\vskip-2ex
$$
{\frac{L M (e^{\omega\tau}-1)\left( 1+ M \| {\cal B}_\tau^{-1} \| e^{\omega\tau}  \|{\cal B}\|_0 \right)}{\omega}  < 1.}
\eqno(A4)
$$
\vskip-1ex
Then, for any $x^{(0)}\in X_{D'} $, the sequence
$
x^{(k)} = {\cal F}(x^{(k-1)})$, $k=1,2,...$,
converges to the limit $x^*\in X_{D'}$ as $k\to \infty$. This limit function $x^*(t)$ is the unique solution of {the boundary value problem~\eqref{sys_switchings}, \eqref{affine_BC}}.}
\end{theorem}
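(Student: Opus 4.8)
The plan is to recognize $\mathcal{F}$ as the fixed-point map whose fixed points are exactly the solutions of the boundary value problem, and then to apply the contraction mapping principle on the closed set $X_{D'}$; the quantity on the left of (A4) will turn out to be precisely the Lipschitz constant of $\mathcal{F}$. First I would establish the equivalence between fixed points of $\mathcal{F}$ and solutions of \eqref{sys_switchings}, \eqref{affine_BC}. Evaluating $\mathcal{F}(x)(t)$ at $t=0$ gives $\mathcal{F}(x)(0)=c(x)$, so a fixed point $x=\mathcal{F}(x)$ satisfies the variation-of-constants representation \eqref{intexpA} with $x_0=c(x)$ and is therefore a Carath\'eodory solution of \eqref{sys_switchings}. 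Applying $\mathcal{B}$ to the identity $x=\mathcal{F}(x)$ and using the convention $\mathcal{B}(e^{tA}\xi_0)=\mathcal{B}_\tau\xi_0$ together with the definition \eqref{c_fun} of $c$, the integral contributions cancel and leave $\mathcal{B}(x)=\beta$, so the boundary condition \eqref{affine_BC} holds automatically. Conversely, any solution of the problem obeys \eqref{intexpA}, and \eqref{affine_BC} forces its initial value to equal $c(x)$, making it a fixed point. This reduces the theorem to existence and uniqueness of a fixed point of $\mathcal{F}$ in $X_{D'}$.

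Next I would derive the contraction estimate. For $x,y\in X_{D'}$ I set $\phi(t)=\int_0^t e^{(t-s)A}[g(x(s))-g(y(s))]\,ds$, and the key observation is that $\phi(0)=0$, i.e. $\phi\in\mathring X$, so the sharper bound $\|\mathcal{B}(\phi)\|\le\|\mathcal{B}\|_0\,\|\phi\|$ applies; this is exactly what makes the constant match (A4). Using the convexity of $D'$ to justify the mean-value inequality $\|g(x(s))-g(y(s))\|\le L\|x(s)-y(s)\|$ (the segment joining $x(s),y(s)$ stays in $D'\subset D$, where $\|g'\|\le L$) together with the growth bound $\|e^{(t-s)A}\|\le M e^{\omega(t-s)}$ from (A2), I obtain $\|\phi(t)\|\le \frac{LM(e^{\omega\tau}-1)}{\omega}\|x-y\|$ for $t\in[0,\tau]$. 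Since $\mathcal{F}(x)(t)-\mathcal{F}(y)(t)=e^{tA}(c(x)-c(y))+\phi(t)$ and $c(x)-c(y)=-\mathcal{B}_\tau^{-1}\mathcal{B}(\phi)$, combining these with $\|e^{tA}\|\le M e^{\omega\tau}$ and taking the supremum over $t$ yields $\|\mathcal{F}(x)-\mathcal{F}(y)\|\le q\|x-y\|$ with $q$ equal to the left-hand side of (A4), hence $q<1$.

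Finally, I would invoke the Banach fixed point theorem. The set $X_{D'}$ is a closed subset of the Banach space $X$, because a uniform limit of continuous functions taking values in the closed set $D'$ again takes values in $D'$; assumption (A3) guarantees $\mathcal{F}(X_{D'})\subset X_{D'}$. Thus $\mathcal{F}$ is a contraction of a complete metric space into itself, so it possesses a unique fixed point $x^*\in X_{D'}$, and the iterates $x^{(k)}=\mathcal{F}(x^{(k-1)})$ converge to $x^*$ from any $x^{(0)}\in X_{D'}$. By the equivalence of the first step, $x^*$ is the unique solution of the boundary value problem in $X_{D'}$. I expect the main obstacle to be the bookkeeping in the contraction estimate, specifically the need to exploit that the argument of $\mathcal{B}$ in $c(x)-c(y)$ lies in $\mathring X$, so that $\|\mathcal{B}\|_0$ rather than the full operator norm of $\mathcal{B}$ enters the bound; this is what allows the computed Lipschitz constant to coincide exactly with the expression in (A4).
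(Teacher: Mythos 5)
Your proposal is correct and follows essentially the same route as the paper: both reduce the theorem to the Banach fixed point theorem on the complete, $\mathcal{F}$-invariant set $X_{D'}$, with the left-hand side of $(A4)$ appearing as the Lipschitz constant of $\mathcal{F}$ via the bounds $\|e^{tA}\|\le Me^{\omega\tau}$, $\|g'\|\le L$ and the restricted norm $\|\mathcal{B}\|_0$ applied to the integral term, which vanishes at $t=0$. The only differences are cosmetic: you obtain the contraction constant by a direct estimate of $\|\mathcal{F}(x)-\mathcal{F}(y)\|$ using the mean value inequality on the convex set $D'$, whereas the paper bounds the operator norm of the Fréchet derivative $\mathcal{F}'(x)$; and you spell out explicitly the equivalence between fixed points of $\mathcal{F}$ and Carath\'eodory solutions of \eqref{sys_switchings}, \eqref{affine_BC}, which the paper leaves implicit.
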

\begin{proof}
Under {the assumptions that ${\cal B}_\tau$ is nonsingular and $(A2)$ holds}, the operator ${\cal F}:X_D\to X$ defined by~\eqref{F_op}--\eqref{c_fun} is Fréchet differentiable.
Indeed, the Fréchet derivative ${\cal F}'(x):X\to X$ at $x \in X_D$ is a bounded linear operator, and its action on $\delta x \in X$ is defined as follows:
\begin{equation}
{\cal F}'(x) (\delta x)(t) = e^{tA} c'_x (\delta x)
+\int_0^t e^{(t-s)A} g'(x(s)) \delta x(s) \, ds,
\label{dF}
\end{equation}
where
\begin{equation}
c'_x (\delta x) ={ - {\cal B}_\tau^{-1} {\cal B}\left(\int_0^t e^{(t-s)A} g'(x(s)) \delta x(s) \, ds\right)}.
\label{dc}
\end{equation}
By taking into account inequalities~$(A2)$, we conclude that
{
$$\small
\begin{aligned}
&\|{\cal F}'(x) (\delta x)\|  \le   Me^{\omega \tau} \|c'_x(\delta x)\| + \sup_{t\in [0,\tau]} \int_0^t \|e^{(t-s)A} g'(x(s))\|ds \|\delta x\| \\
&
\le \frac{L M (e^{\omega\tau}-1)}{\omega} \left( 1+ M \| {\cal B}_\tau^{-1} \| e^{\omega\tau}  \|{\cal B}\|_0 \right)  \|\delta x\|.
\end{aligned}
$$
}
We see that the operator $\cal F$ is contractive on $X_{D'}$ if condition~$(A4)$ holds. Additionally, ${\cal F}(X_{D'})\subseteq X_{D'}$ due to assumption~$(A3)$,
and the metric space $\left(X_{D'},d\right)$ equipped with the distance $d(x,y)=\|x-y\|_X$ is complete.
 Thus,  the assertion of Theorem~\ref{convtheorem} follows from the Banach fixed point theorem~\cite[Ch.~XVI, \S 1, Thm.~1 and Ch.~XVII, \S 1, Thm.~1]{kantorovich1982functional}.
\end{proof}

\begin{remark}
The initial function $x^{(0)}(t)$ can be taken, in particular, as $x^{(0)}(t)\equiv 0$. In this case, it is easy to see that the first approximation $x^{(1)}(t)$ is the solution of the system $\dot x^{(1)} = Ax^{(1)} + u(t) +g(0)$, $t\in [0,\tau]$, such that {${\cal B}(x^{(1)}) = \beta$}.
\end{remark}

{
\begin{remark}
To apply Theorem~\ref{convtheorem} to the periodic  problem~\eqref{sys_switchings}--\eqref{BC}, we express the boundary conditions~\eqref{affine_BC} using~\eqref{BC_periodic_abstract} and note that
${\cal B}_\tau = e^{\tau A} - I$, so the nonsinularity of ${\cal B}_\tau$ is equivalent to~$(A1)$.
In this case, $\|{\cal B}\|_0 = 1$, and the vector functional~\eqref{c_fun} and its derivative take the form:
\begin{equation}
c(x) = (e^{-\tau A}-I)^{-1} \int_0^\tau e^{-sA} \left[g(x(s))+u(s)\right] \,ds,
\label{c_fun_old}
\end{equation}
\begin{equation}
c'_x (\delta x) = (e^{-\tau A}-I)^{-1}\int_0^\tau e^{-sA} g'(x(s)) \delta x(s) \, ds.
\label{dc_old}
\end{equation}
\end{remark}}

\section{Newton's method}\label{newton}

In this section, we develop Newton's method for finding the fixed points of ${\cal F}$ {in the case of periodic boundary conditions~\eqref{affine_BC},~\eqref{BC_periodic_abstract}}. For this purpose, we introduce the nonlinear operator
$P:X_D \to X$ such that
\begin{equation}
P(x)  =  {\cal F}(x) - x,
\label{p_map}
\end{equation}
where ${\cal F}:X_D \to X$ is given by~\eqref{F_op}{, with the functional $c(x)$ defined in~\eqref{c_fun_old}}.
It is clear that the fixed points of $\cal F$ are solutions to the equation $P(x)=0$.
%

Given an initial function $x^{(0)}\in X_D$ such that the operator $[P'(x^{(0)})]^{-1}$ is nonsingular, we define the sequence of modified Newton's iterations $\{ x^{(k)}\}_{k=0}^\infty$ by the rule~\cite{kantorovich1982functional}:
\begin{equation}
 x^{(k+1)} =  x^{(k)} - [P'(x^{(0)})]^{-1} P( x^{(k)}), \quad k=0,1,\dots\, .
\label{newton_modified}
\end{equation}
We also consider the ``classical'' Newton's sequence $\{\tilde x^{(k)}\}_{k=0}^\infty$, defined as
\begin{equation}
\tilde x^{(k+1)} = \tilde x^{(k)} - [P'(\tilde x^{(k)})]^{-1} P(\tilde x^{(k)}),\quad \tilde x^{(0)}=x^{(0)}.
\label{newton_iterations}
\end{equation}

In order to represent the action of the operator $[P'(x)]^{-1}$ on a vector function $\delta y\in X$, we solve the following functional equation with respect to $\delta x\in X$:
$
P'(x)\delta x = \delta y
$,
or, equivalently,
$
{\cal F}'(x) (\delta x)- \delta x = \delta y
$.
The above equation can be rewritten, due to~\eqref{dF}, in the form
\begin{equation}
\small
c'_x(\delta x) + \int_0^t e^{-sA}g'(x(s)) \delta x (s) ds - e^{-tA}\delta x(t) = e^{-tA} \delta y(t).
\label{dx_eq}
\end{equation}
By differentiating this formula with respect to $t$ and introducing the function $\delta z(t)=\delta x(t) + \delta y(t)$, we obtain
\begin{equation}
\dot \delta z(t) = \left(A + g'(x(t))\right)\delta z (t) - g'(x(t)) \delta y(t).
\label{z_system}
\end{equation}
Let $\Phi_x(t)\in \textrm{Mat} (n\times n)$ be the fundamental matrix of the corresponding homogeneous system, i.e.,
\begin{equation}
\Phi_x(0)=I, \;\; \dot \Phi_x (t) = \left(A + g'(x(t)) \right) \Phi_x (t)\;\; \text{for}\; t\in [0,\tau].
\label{Phi_x}
\end{equation}
Note that the matrix $\Phi_x^{-1}(s)$ is well-defined for all $s\in [0,\tau]$ due to the uniqueness of solutions to the Cauchy problem~\eqref{Phi_x}.
Then, the variation of constants method yields the general solution of~\eqref{z_system}:
$
\delta z(t) = \Phi_x(t)\delta z(0) - \Phi_x(t)\int_0^t \Phi_x^{-1}(s) g'(x(s)) \delta y(s)ds
$.
We rewrite the above formula with respect to $\delta x(t) = \delta z(t)-\delta y(t)$ as
\begin{equation}
\small
\delta x(t) = \Phi_x(t) C_x - \Phi_x(t)\int_0^t \Phi_x^{-1}(s) g'(x(s)) \delta y(s)ds - \delta y(t).
\label{dx}
\end{equation}
The integration constant $C_x=\delta x(0)+\delta y(0)$ is defined from~\eqref{dx_eq}. Indeed, the relation~\eqref{dx_eq} at $t=0$ implies that $C_x=\delta x(0)+\delta y(0)=\delta x_c(\delta x)$. The integration constant $C$ can then be eliminated by substituting~\eqref{dx} into~{\eqref{dc_old}}. Thus,
\begin{equation}
\begin{aligned}
 C_x &= M_x^{-1} \left(e^{-\tau A}-I\right)^{-1}
\int_0^\tau e^{-tA} g'(x(t)) \\
&\times \left\{ \delta y(t) + \Phi_x(t)\int_0^t \Phi_x^{-1}(s) g'(x(s)) \delta y(s) ds \right\} dt,
\end{aligned}
\label{C_form}
\end{equation}
\begin{equation}\label{M_x}
M_x = \left(e^{-\tau A}-I\right)^{-1}\int_0^\tau e^{-tA} g'(x(t)) \Phi_x(t) dt - I.
\end{equation}
The following lemmas are needed to construct $[P'(x)]^{-1}$.

\begin{lemma}\label{lemma_X}{\em
Let assumption $(A2)$ hold, and let $x(t)\in D$ be a continuous function on $t\in [0,\tau]$. Then the matrix $\Phi_x(t)$ in~\eqref{Phi_x} satisfies the following estimates for all $t\in [0,\tau]$:
\begin{equation}\label{Phi_est}
\|\Phi_x(t)\|\le \phi_L(t), \|\Phi_x^{-1}(t)\|\le \phi_L(t), \frac{\phi_L(t)}{\sqrt{n}} =e^{(\|A\|+L)t}.
\end{equation}}
\end{lemma}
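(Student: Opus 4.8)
The plan is to treat the two matrix-valued estimates by the same mechanism: rewrite each Cauchy problem in integral form and run a Gr\"onwall argument in the Frobenius norm, then pass back to the $2$-norm using $\|M\| \le \|M\|_F$. The factor $\sqrt{n}$ appearing in $\phi_L$ is precisely what this route produces, since it enters through $\|I\|_F = \sqrt{n}$.

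First I would handle $\Phi_x(t)$ itself. Integrating~\eqref{Phi_x} gives the fixed-point form $\Phi_x(t) = I + \int_0^t \left(A + g'(x(s))\right)\Phi_x(s)\,ds$. Taking Frobenius norms and using the submultiplicativity $\|MN\|_F \le \|M\|\,\|N\|_F$ together with $(A2)$ (which yields $\|A + g'(x(s))\| \le \|A\| + L$ for every $s$, because $x(s)\in D$), I obtain the scalar integral inequality $\|\Phi_x(t)\|_F \le \sqrt{n} + (\|A\|+L)\int_0^t \|\Phi_x(s)\|_F\,ds$. Gr\"onwall's lemma then gives $\|\Phi_x(t)\|_F \le \sqrt{n}\, e^{(\|A\|+L)t} = \phi_L(t)$, and since $\|\Phi_x(t)\| \le \|\Phi_x(t)\|_F$ the first estimate in~\eqref{Phi_est} follows.

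For the inverse I would first record the differential equation satisfied by $\Phi_x^{-1}$. Differentiating $\Phi_x(t)\Phi_x^{-1}(t) = I$ and substituting~\eqref{Phi_x} yields $\frac{d}{dt}\Phi_x^{-1}(t) = -\Phi_x^{-1}(t)\left(A + g'(x(t))\right)$ with $\Phi_x^{-1}(0) = I$; here the fact that $\Phi_x^{-1}(t)$ is well defined and continuously differentiable on all of $[0,\tau]$ is exactly the remark already made after~\eqref{Phi_x}. Its integral form is $\Phi_x^{-1}(t) = I - \int_0^t \Phi_x^{-1}(s)\left(A + g'(x(s))\right)\,ds$, and now I would estimate in the Frobenius norm using the companion bound $\|MN\|_F \le \|M\|_F\,\|N\|$, which again produces $\|\Phi_x^{-1}(t)\|_F \le \sqrt{n} + (\|A\|+L)\int_0^t \|\Phi_x^{-1}(s)\|_F\,ds$ and hence, by Gr\"onwall, $\|\Phi_x^{-1}(t)\|_F \le \phi_L(t)$. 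Passing once more to the $2$-norm completes the second estimate.

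The only genuinely delicate point is the treatment of the inverse: one must justify that $\Phi_x^{-1}(t)$ is well defined and differentiable on the whole interval before writing its differential equation, and one must keep the factor $A + g'(x(t))$ on the \emph{right} of $\Phi_x^{-1}$ so that the appropriate submultiplicative estimate preserves the operator-norm bound $\|A\| + L$ on that factor. Everything else reduces to a routine Gr\"onwall computation; the choice of the Frobenius norm as the working norm is what makes the uniform constant $\sqrt{n}$ emerge cleanly and bound both $\Phi_x$ and $\Phi_x^{-1}$ by the same $\phi_L$.
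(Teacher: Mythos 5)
Your proof is correct and follows the same route the paper intends: the paper's own proof is the one-line remark that the estimates ``easily follow from Gr\"onwall's inequality,'' and you have simply supplied the details (integral form of~\eqref{Phi_x}, the adjoint equation $\frac{d}{dt}\Phi_x^{-1}=-\Phi_x^{-1}(A+g'(x(t)))$ for the inverse, and Gr\"onwall in the Frobenius norm). Your explanation of the factor $\sqrt{n}$ via $\|I\|_F=\sqrt{n}$ is a sensible reconstruction, though it is worth noting that running Gr\"onwall directly in the operator $2$-norm (where $\|I\|=1$) already yields the sharper bound $e^{(\|A\|+L)t}$, so the stated $\phi_L(t)$ holds a fortiori either way.
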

\begin{proof}
The estimates in~\eqref{Phi_est} easily follow from Grönwall's inequality.
\end{proof}

\begin{lemma}\label{lemma_Y}{\em
Let assumptions~$(A1)$--$(A2)$ be satisfied, and let
$$
S = \frac{\sqrt{n} M L R_\tau (e^{(\|A\|+L+\omega)\tau}-1)}{\|A\|+L+\omega} <1,
\eqno(A5)
$$
{
where
\begin{equation}
R_\tau = \left\|(e^{-\tau A}-I)^{-1}\right\|.
\label{RT}
\end{equation}
}
Then the matrix $M_x$ is nonsingular, and $\|M_x^{-1}\|\le \frac{1}{1-S}$.}
\end{lemma}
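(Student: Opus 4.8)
The plan is to write $M_x$ as a perturbation of the identity and invoke the Neumann series. Observe from~\eqref{M_x} that $M_x = N_x - I$, where
\begin{equation*}
N_x = \left(e^{-\tau A}-I\right)^{-1}\int_0^\tau e^{-tA} g'(x(t)) \Phi_x(t)\, dt.
\end{equation*}
Assumption~$(A1)$ guarantees that $(e^{-\tau A}-I)^{-1}$ exists, so $R_\tau$ in~\eqref{RT} is finite and $N_x$ is well-defined. Since $M_x = -(I - N_x)$, it suffices to prove the single estimate $\|N_x\| \le S$: indeed, if $\|N_x\| \le S < 1$, then $I-N_x$ is invertible by the Neumann series, with $\|(I-N_x)^{-1}\| \le (1-\|N_x\|)^{-1} \le (1-S)^{-1}$, and hence $M_x$ is nonsingular with $M_x^{-1} = -(I-N_x)^{-1}$ and $\|M_x^{-1}\| \le (1-S)^{-1}$, which is exactly the assertion.

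The core of the argument is therefore the bound on $\|N_x\|$. First I would pass the matrix norm inside the integral and use submultiplicativity to get
\begin{equation*}
\|N_x\| \le R_\tau \int_0^\tau \|e^{-tA}\|\, \|g'(x(t))\|\, \|\Phi_x(t)\|\, dt.
\end{equation*}
Each factor is then estimated pointwise: from~$(A2)$ we have $\|e^{-tA}\| \le M e^{\omega t}$ for $t\in[0,\tau]$ (taking $-t\in[-\tau,0]$) and $\|g'(x(t))\| \le L$, while Lemma~\ref{lemma_X} gives $\|\Phi_x(t)\| \le \phi_L(t) = \sqrt{n}\, e^{(\|A\|+L)t}$. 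Substituting these and combining the exponentials into a single exponent $\|A\|+L+\omega$ yields
\begin{equation*}
\|N_x\| \le \sqrt{n}\, M L R_\tau \int_0^\tau e^{(\|A\|+L+\omega)t}\, dt = \frac{\sqrt{n}\, M L R_\tau \bigl(e^{(\|A\|+L+\omega)\tau}-1\bigr)}{\|A\|+L+\omega} = S.
\end{equation*}
Together with the Neumann-series argument above, this completes the proof.

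The argument is essentially careful bookkeeping, so I do not anticipate a genuine obstacle. The only points requiring attention are: (i) recognizing the precise sign structure $M_x = -(I-N_x)$, so that the bound $\|M_x^{-1}\|\le (1-S)^{-1}$ emerges with the correct constant; (ii) applying~$(A2)$ to $e^{-tA}$ rather than $e^{tA}$, which is legitimate because~$(A2)$ is stated symmetrically on $[-\tau,\tau]$; and (iii) ensuring that the three exponential rates add correctly under the integral, so that the denominator $\|A\|+L+\omega$ matches the exponent appearing in the numerator.
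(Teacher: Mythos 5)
Your proposal is correct and follows essentially the same route as the paper: the paper also writes $M_x = B - I$ with $B$ equal to your $N_x$, bounds $\|B\|\le R_\tau M L\int_0^\tau e^{\omega t}\phi_L(t)\,dt = S$ using $(A2)$ and Lemma~\ref{lemma_X}, and inverts via the Neumann series $M_x^{-1}=-\sum_{k\ge 0}B^k$ to get $\|M_x^{-1}\|\le (1-S)^{-1}$. The only discrepancy is that the paper's text attributes the convergence of the series to assumption $(A6)$, which is evidently a typo for $(A5)$, exactly as you use it.
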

This lemma is proved in the Appendix.

\begin{lemma}\label{lemma_Pinv}{\em
Given $x\in X$, let the matrices $\Phi_x(t)$ and $M_x$ be defined by~\eqref{Phi_x} and~\eqref{M_x}, respectively, and assume that $M_x$ is nonsingular.
Then there exists $[P'(x)]^{-1}:X\to X$, and for any $\delta y\in X$, the value $\delta x = [P'(x)]^{-1} \delta y\in X$ is defined by formulas~\eqref{dx} and~\eqref{C_form}.
If, in addition, assumptions~$(A2)$ and $(A5)$ are satisfied, then the linear operator $[P'(x)]^{-1}$ is bounded and $\|[P'(x)]^{-1}\|\le \rho_1$, where
\begin{equation}\label{rho1}
\begin{aligned}
&\rho_1=  1+ \frac{L M R_\tau (e^{\omega\tau}-1)\phi_L(\tau)}{(1-S)\omega}\\
&+\frac{L \phi_L(\tau)(\phi_L(\tau)-\sqrt{n})}{\|A\|+L}\left(1+\frac{L M R_\tau (e^{\omega\tau}-1)\phi_L(\tau)}{(1-S)\omega}\right).
\end{aligned}
\end{equation}}
\end{lemma}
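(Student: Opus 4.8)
The plan is to handle the two claims of Lemma~\ref{lemma_Pinv} separately: first I would verify that the formulas~\eqref{dx}--\eqref{C_form} genuinely define a two-sided inverse $[P'(x)]^{-1}$ on all of $X$ under the single hypothesis that $M_x$ is nonsingular, and only then would I bring in $(A2)$ and $(A5)$ to estimate the operator norm.

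For existence, I would argue that the derivation preceding the lemma is reversible. Starting from $P'(x)\delta x=\delta y$, i.e. ${\cal F}'(x)(\delta x)-\delta x=\delta y$, the integral identity~\eqref{dx_eq} is equivalent to this equation because the functional $c'_x$ in~\eqref{dc_old} only enters through an integral over the full interval $[0,\tau]$. Differentiating~\eqref{dx_eq} in $t$ gives the linear system~\eqref{z_system} for $\delta z=\delta x+\delta y$, whose general solution via variation of constants is~\eqref{dx} with free constant $C_x=\delta z(0)=\delta x(0)+\delta y(0)$. Since differentiation discards the value at $t=0$, I would recover it by evaluating~\eqref{dx_eq} at $t=0$, which yields $C_x=c'_x(\delta x)$; substituting~\eqref{dx} into~\eqref{dc_old} then produces a linear algebraic relation for $C_x$ whose unique solvability is precisely the invertibility of $M_x$ in~\eqref{M_x}, giving~\eqref{C_form}. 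Conversely, the $\delta x$ built from~\eqref{dx},~\eqref{C_form} lies in $X$ (since $\Phi_x$ and the integrals are continuous), solves~\eqref{z_system}, and satisfies~\eqref{dx_eq} at $t=0$ by construction of $C_x$, hence satisfies~\eqref{dx_eq} for all $t$ by uniqueness for the Cauchy problem. This shows $P'(x)\delta x=\delta y$, so $P'(x)$ is a bijection of $X$ and $[P'(x)]^{-1}$ is exactly $\delta y\mapsto\delta x$.

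For the norm bound I would write~\eqref{dx} as $\delta x(t)=\Phi_x(t)C_x-W(t)-\delta y(t)$, where $W(t)=\Phi_x(t)\int_0^t\Phi_x^{-1}(s)g'(x(s))\delta y(s)\,ds$, and estimate the three pieces using Lemma~\ref{lemma_X} ($\|\Phi_x(t)\|,\|\Phi_x^{-1}(t)\|\le\phi_L(t)\le\phi_L(\tau)$), assumption $(A2)$ ($\|g'\|\le L$ and $\|e^{-tA}\|\le Me^{\omega t}$), and Lemma~\ref{lemma_Y} ($\|M_x^{-1}\|\le(1-S)^{-1}$). The one elementary identity I need is $\phi_L(t)\int_0^t\phi_L(s)\,ds=\frac{\phi_L(t)(\phi_L(t)-\sqrt n)}{\|A\|+L}$, which follows from $\phi_L(t)=\sqrt n\,e^{(\|A\|+L)t}$; it gives $\|W(t)\|\le B\|\delta y\|$ with $B=\frac{L\phi_L(\tau)(\phi_L(\tau)-\sqrt n)}{\|A\|+L}$, and hence $\|\delta y(t)+W(t)\|\le(1+B)\|\delta y\|$. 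Feeding this into~\eqref{C_form} together with $\int_0^\tau\|e^{-tA}\|\,dt\le\frac{M(e^{\omega\tau}-1)}{\omega}$ and the bound on $R_\tau$ in~\eqref{RT} gives $\|\Phi_x(t)C_x\|\le\phi_L(\tau)\|C_x\|\le K(1+B)\|\delta y\|$, where $K=\frac{LMR_\tau(e^{\omega\tau}-1)\phi_L(\tau)}{(1-S)\omega}$. Adding the three contributions yields $\|\delta x\|\le(1+B+K(1+B))\|\delta y\|=(1+K)(1+B)\|\delta y\|$, and expanding $(1+K)(1+B)$ reproduces the stated $\rho_1$ in~\eqref{rho1}.

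I expect the genuine obstacle to be the first part rather than the estimates. One must check carefully that passing to the differentiated system~\eqref{z_system} neither loses nor creates solutions, so that the reconstructed $\delta x$ inverts $P'(x)$ on \emph{all} of $X$, and that the scalar constraint obtained by evaluating~\eqref{dx_eq} at $t=0$ is exactly what the nonsingularity of $M_x$ resolves; this is where the lemma earns its precise hypothesis. The norm computation is routine once the identity for $\phi_L(t)\int_0^t\phi_L(s)\,ds$ is available, the only mild subtlety being the recognition that the three separate estimates factor as $(1+K)(1+B)$ so as to match $\rho_1$.
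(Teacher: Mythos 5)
Your proposal is correct and follows essentially the same route as the paper: the invertibility of $P'(x)$ is read off from the (reversible) derivation of~\eqref{dx_eq}--\eqref{M_x}, and the norm bound comes from estimating $\|C_x\|$ and the three terms of~\eqref{dx} via Lemmas~\ref{lemma_X} and~\ref{lemma_Y}, with your constants $B$ and $K$ reproducing the paper's inequalities~\eqref{deltaCest} and the factorization $\rho_1=(1+K)(1+B)$. The only difference is that you spell out the bijectivity argument (evaluation of~\eqref{dx_eq} at $t=0$ and the role of $M_x$) more explicitly than the paper, which simply asserts it from the preceding derivation.
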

\vskip0.5ex
\begin{proof}
The above formulas~\eqref{dx}--\eqref{M_x} define the action of $[P'(x)]^{-1}$ on an arbitrary $\delta y\in X$, provided that the matrix $M_x$ is nonsingular. It remains to prove that $[P'(x)]^{-1}\in {\mathcal L}(X,X)$ under assumptions~$(A2)$ and $(A5)$. For this purpose, we estimate the norm of $\delta x\in X$ in~\eqref{dx} and the norm of $C_x\in {\mathbb R}^n$ in~\eqref{C_form} using Lemmas~\ref{lemma_X} and~\ref{lemma_Y}, along with the triangle and H\"older's inequalities:
\begin{equation}\label{deltaCest}
\small
\begin{aligned}
\|\delta x\| &\le  \phi_L(\tau) \|C_x\| + \left(\frac{L \phi_L(\tau)(\phi_L(\tau)-\sqrt{n})}{\|A\|+L} +1 \right) \|\delta y\|, \\
\|C_x\| &\le \frac{L M R_\tau (e^{\omega\tau}-1)}{(1-S)\omega}\left(1+ \frac{L \phi_L(\tau)(\phi_L(\tau)-\sqrt{n})}{\|A\|+L}\right) \|\delta y\|.
\end{aligned}
\end{equation}
Formulas~\eqref{deltaCest} imply that $\|[P'(x)]^{-1}\|\le \rho_1(x)$, with $\rho_1(x)$ expressed in~\eqref{rho1}, thereby completing the proof.
\end{proof}

To formulate sufficient conditions for the convergence of Newton's method, we assume that $g(\xi)=(g_1(\xi),...,g_n(\xi))^\top$ is twice continuously differentiable and that the Hessian matrices of its components are bounded, i.e.,
$$
g\in C^2(D;\mathbb R^n),\;
\left \| \frac{\partial^2 g_k(\xi)}{\partial \xi_i \partial \xi_j} \right\|\le \bar H\;\; \forall \xi\in D,\; k=1, ..., n,
\eqno(A6)
$$
\hskip-0.8ex
with some constant $\bar H\ge 0$. We also define the functional
$\rho_0:X\times L^\infty \left( [0,\tau]; {\mathbb R}^n \right) \to {\mathbb R}_{\ge 0}$
and the nonnegative constant $\rho_2$ by the following rules:
\begin{equation}\label{rho0}
\begin{aligned}
\rho_0(x,u) &= \|x\|+\frac{M(e^{\omega \tau}-1)(1+M R_\tau e^{\omega\tau})}{\omega}\\
&\times \left(\|u\|_{L^\infty[0,\tau]}+\sup_{t\in[0,\tau]} \|g(x(t))\|\right),
\end{aligned}
\end{equation}
\begin{equation}\label{rho2}
\rho_2 = \frac{\sqrt{n} \bar H M (e^{\omega \tau}-1)(1+M R_\tau e^{\omega\tau})}{\omega}.
\end{equation}

We summarize the convergence conditions of Newton's iteration schemes~\eqref{newton_modified} and~\eqref{newton_iterations} in the following theorem.
\begin{theorem}\label{thm_NK}{\em
Let assumptions $(A1)$, $(A2)$, $(A5)$, and $(A6)$ be satisfied.
Given $x^{(0)}\in X_D$, $u\in L^\infty \left( [0,\tau]; {\mathbb R}^n \right)$, and $r>0$ such that $\overline{B_r(x^{(0)})}\subset X_D$,
assume that
\begin{equation}\label{h}
h := \rho_0(x^{(0)},u) \rho_1^2 \rho_2 \le \frac{1}{2},
\end{equation}
\begin{equation}\label{r0}
r_0:= \frac{1-\sqrt{1-2h}}{h}\rho_0(x^{(0)},u) \rho_1 \le r,
\end{equation}
where $\rho_i$ are defined in~\eqref{rho0},~\eqref{rho1}, and~\eqref{rho2}.

Then, there exists an $x^*\in \overline{B_{r_0}(x^{(0)})}$ such that $P(x^*)=0$, and both the modified~\eqref{newton_modified} and classical~\eqref{newton_iterations} schemes converge to $x^*$ as $k\to +\infty$. The convergence rates of these schemes are estimated by the following inequalities:
\begin{equation}\label{conv_classical}
\|\tilde x^{(k)}-x^*\| \le 2^{1-k} (2h)^{2^k-1} \eta,\quad k=0,1,...\, ,
\end{equation}
\begin{equation}\label{conv_modified}
\|x^{(k)}-x^*\| \le \frac{\eta}{h}\left(1-\sqrt{1-2h}\right)^{k+1},\quad k=0,1,...\, ,
\end{equation}
where $\eta = \rho_0(x^{(0)},u) \rho_1$.
Moreover, if $h < \frac12$, then the zero $x^*$ of $P$ is unique in $B_r(x^{(0)})\cap B_{r_1}(x^{(0)})$,  where $r_1 =\frac{1+\sqrt{1-2h}}{h}\eta$;  and if $h=\frac12$, then $x^*$ is unique in $\overline{B_{r_0}(x^{(0)})}$.}
\end{theorem}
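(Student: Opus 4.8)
The plan is to recognize Theorem~\ref{thm_NK} as a direct instance of the Newton--Kantorovich theorem~\cite{kantorovich1982functional} applied to the operator $P(x)={\cal F}(x)-x$ on the Banach space $X$. Setting $\beta=\rho_1$ for the bound on the inverse derivative, $K=\rho_2$ for the bound on the second derivative, and recalling $\eta=\rho_0(x^{(0)},u)\rho_1$ for the norm of the initial Newton correction $[P'(x^{(0)})]^{-1}P(x^{(0)})$, the scalar $h$ in~\eqref{h} is precisely the Kantorovich invariant $\beta K\eta=\rho_1\cdot\rho_2\cdot\rho_0(x^{(0)},u)\rho_1$. The quadratic and linear convergence estimates~\eqref{conv_classical}--\eqref{conv_modified}, the existence radius $r_0$ in~\eqref{r0}, and the uniqueness radius $r_1$ — the smaller and larger roots $\tfrac{1\mp\sqrt{1-2h}}{h}\eta$ of the associated Kantorovich majorizing quadratic — are then exactly the conclusions of that theorem once its structural hypotheses are verified. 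Thus the proof reduces to establishing three bounds with the stated constants.

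First I would confirm that $P$ is twice Fréchet differentiable on $X_D$ under~$(A6)$. Since $P$ differs from ${\cal F}$ by the linear identity map, $P''={\cal F}''$, and differentiating~\eqref{dF} once more replaces the term $g'(x(s))\delta x(s)$ by the bilinear expression $g''(x(s))[\delta x_1(s),\delta x_2(s)]$, together with the corresponding boundary correction through $(e^{-\tau A}-I)^{-1}$. The crucial estimate is the bound on this bilinear operator: denoting by $H_k(\xi)$ the Hessian of the $k$-th component $g_k$, assumption~$(A6)$ gives $\|H_k(\xi)\|\le\bar H$, so that
\begin{equation*}
\|g''(\xi)[v,w]\|^2=\sum_{k=1}^n\left(v^\top H_k(\xi)\,w\right)^2\le\sum_{k=1}^n\|H_k(\xi)\|^2\|v\|^2\|w\|^2\le n\bar H^2\|v\|^2\|w\|^2,
\end{equation*}
whence $\|g''(\xi)[v,w]\|\le\sqrt{n}\,\bar H\,\|v\|\,\|w\|$. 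This $\sqrt{n}\,\bar H$ factor is exactly what replaces the Lipschitz constant $L$ in the estimate of $\|{\cal F}'\|$ carried out in the proof of Theorem~\ref{convtheorem}; repeating that estimate with $\|{\cal B}\|_0=1$, $\|{\cal B}_\tau^{-1}\|=R_\tau$, and $L$ replaced by $\sqrt{n}\,\bar H$ yields $\|P''(x)\|=\|{\cal F}''(x)\|\le\rho_2$ uniformly on $X_D$, with $\rho_2$ as in~\eqref{rho2}. This is the step I expect to be the main obstacle, as it requires careful handling of the tensorial structure of $g''$ and the resulting $\sqrt{n}$ constant.

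It remains to supply the other two constants. The bound $\|[P'(x^{(0)})]^{-1}\|\le\rho_1$, together with invertibility of $P'(x^{(0)})$, is provided directly by Lemma~\ref{lemma_Pinv} under assumptions~$(A1)$, $(A2)$, and~$(A5)$, where $(A1)$ ensures that $R_\tau$ and $M_{x^{(0)}}$ are well defined and $M_{x^{(0)}}$ nonsingular via Lemma~\ref{lemma_Y}. For the initial residual, a direct estimate of~\eqref{F_op} with the functional~\eqref{c_fun_old} using~$(A2)$ gives
\begin{equation*}
\|{\cal F}(x^{(0)})\|\le\frac{M(e^{\omega\tau}-1)(1+MR_\tau e^{\omega\tau})}{\omega}\left(\|u\|_{L^\infty[0,\tau]}+\sup_{t\in[0,\tau]}\|g(x^{(0)}(t))\|\right),
\end{equation*}
whence $\|P(x^{(0)})\|\le\|x^{(0)}\|+\|{\cal F}(x^{(0)})\|\le\rho_0(x^{(0)},u)$ by~\eqref{rho0}, and therefore $\|[P'(x^{(0)})]^{-1}P(x^{(0)})\|\le\rho_1\rho_0(x^{(0)},u)=\eta$. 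Assembling these, condition~\eqref{h} becomes the Kantorovich hypothesis $h=\beta K\eta\le\tfrac12$, while~\eqref{r0} together with $\overline{B_r(x^{(0)})}\subset X_D$ ensures that the relevant Kantorovich balls lie in the region where the $\rho_2$-bound on $P''$ is valid. Invoking the Newton--Kantorovich theorem then yields the existence of $x^*\in\overline{B_{r_0}(x^{(0)})}$ with $P(x^*)=0$, the convergence of both schemes~\eqref{newton_modified} and~\eqref{newton_iterations} with the rates~\eqref{conv_classical}--\eqref{conv_modified}, and the stated uniqueness in $B_r(x^{(0)})\cap B_{r_1}(x^{(0)})$ (or in $\overline{B_{r_0}(x^{(0)})}$ when $h=\tfrac12$), completing the proof.
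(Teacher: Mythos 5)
Your proposal is correct and follows essentially the same route as the paper's own proof: it reduces the theorem to the Newton--Kantorovich theorem by establishing the three bounds $\|[P'(x^{(0)})]^{-1}\|\le\rho_1$ (via Lemma~\ref{lemma_Pinv}), $\|P(x^{(0)})\|\le\rho_0(x^{(0)},u)$, and $\|P''(x)\|=\|{\cal F}''(x)\|\le\rho_2$ with the $\sqrt{n}\,\bar H$ Hessian estimate obtained exactly as in the paper. The identification $h=\beta K\eta$ and the roles of $r_0$, $r_1$ match the paper's argument as well.
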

The proof of this result is based on the Newton--Kantorovich theorem~\cite{kantorovich1948newton,kantorovich1982functional,fernandez2020mild} and is presented in the Appendix.

\section{A controlled chemical reaction model}\label{application}

In this section, we apply the proposed iterative schemes to study periodic trajectories of the non-isothermal chemical reaction model of order $\gamma$ considered in~\cite{zuyev2017isoperimetric}:
\begin{equation}\label{sys_chem}
\small
\dot x(t) = Ax(t) + g(x(t)) + u(t),\;\; x(t)\in D,\, u(t)\in U,\,t\in [0,\tau],
\end{equation}
where
$$
\begin{aligned}
&D= \{(x_1,x_2)^\top \in {\mathbb R}^2 \,\vert\, x_1>-1,\,x_2>-1\},\\
&U= \{(u_1,u_2)^\top \in {\mathbb R}^2 \,\vert\, u_1\in [u_1^{min},u_1^{max}],\,u_2\in [u_2^{min},u_2^{max}]\},\\
&A = \begin{pmatrix}-\phi_1 & 0\\
0 & -\phi_2
\end{pmatrix},\;
g(x) = \begin{pmatrix}k_1 e^{-\varkappa} - (x_1+1)^\gamma e^{-\varkappa/(x_2+1)}\\
k_2 e^{-\varkappa} - (x_1+1)^\gamma e^{-\varkappa/(x_2+1)}\end{pmatrix},
\end{aligned}
$$
and $k_1$, $k_2$, $\gamma$, $\varkappa$, $\phi_1$, $\phi_2$ are real parameters.
System~\eqref{sys_chem} describes the deviations of the dimensionless concentration $x_1(t)$ and temperature $x_2(t)$ from their reference values in a reactor, where the inlet concentration and temperature are controlled by $u(t)$.
This system admits the trivial equilibrium $x_1=x_2=0$ for $u(t)\equiv 0$, which corresponds to steady-state reactor operation.
In the general case, the function $u(t)$ can encode complex control scenarios and is assumed to be of class $L^\infty\bigl( [0,\tau]; U \bigr)$.
The constraints $x_1>-1$ and $x_2>-1$ in $D$ postulate that the corresponding physical concentration and absolute temperature in Kelvin must be positive.
The physical meaning of the parameters of system~\eqref{sys_chem} is discussed in~\cite{zuyev2017isoperimetric}, and we take the following values for numerical simulations~\cite{benner2019periodic}:
\begin{equation}\label{parameters}
\small
\begin{aligned}
&\gamma=\phi_1=\phi_2=1,\;k_1=5.819\cdot 10^7,\;k_2=-8.99\cdot 10^5,\\
&\varkappa=17.77,\, u_1^{max}=-u_1^{min} = 1.798,\, u_2^{max}=-u_2^{min}=0.06663.
\end{aligned}
\end{equation}
This choice of parameters corresponds to the hydrolysis reaction
$\mathrm{(CH_3CO)_2 O + H_2 O \to 2\, CH_3 COOH}$ considered in~\cite{zuyev2017isoperimetric,benner2019periodic}, where
$x_1(t)$ represents the dimensionless deviation of the product concentration from its steady-state value. The control constraints in~\eqref{sys_chem},~\eqref{parameters} allow for the modulation of the inlet reactant concentration $u_1(t)$ with a physical amplitude of $85\,\%$, and the variation of the dimensionless temperature $u_2(t)\in [u_2^{min},u_2^{max}]$ corresponds to controlling the temperature in the inlet stream from 275~K to 315~K around the reference temperature of 295~K.

For the periodic problem of maximizing the mean product concentration in this realistic reaction, it was shown in~\cite{zuyev2017isoperimetric} that each optimal solution is achieved through a class of bang-bang controls. However, the question of the existence of
$\tau$-periodic solutions for system~\eqref{sys_chem} under a given bang-bang control remains unresolved. We apply the iterative schemes developed in Sections~\ref{iterative} and~\ref{newton} to compute
$\tau$-periodic trajectories for the considered model with an arbitrary admissible control over the interval $[0,\tau]$.

To test the simple iteration method described in Section~\ref{iterative}, we fix a grid size $n_G\in \mathbb N$ and consider the uniform partition of~$[0,\tau]$ with the step size~$\Delta t =\tau/n_G$:
$t_j = j\Delta t$ for $j=0, 1, ..., n_G$.
Let $x^{(k)}_j$ denote the value of an approximate solution of the operator equation ${\cal F} x(\cdot) = x(\cdot)$ at $t=t_j$, corresponding to the $k$-th iteration from Theorem~\ref{convtheorem}, {with the periodic boundary conditions~\eqref{affine_BC},~\eqref{BC_periodic_abstract}}.
We start from the trivial initial approximation $x^{(0)}_j=0$ for $j=\overline{0,n_G}$.
Then, the iteration with index $k$ is obtained from $\{x^{(k-1)}_j\}_{j=0}^{n_G}$ by applying
the rectangle quadrature rule to approximate the integrals in~\eqref{F_op} and~{\eqref{c_fun_old}}.
The resulting simple iteration method, {with a specified number of iterations  $n_I$, for nonlinear control systems of the form~\eqref{sys_switchings} is referred to as Algorithm~\ref{alg}.}

\begin{algorithm}
\caption{Simple iteration method}\label{alg}
\begin{algorithmic}
\Require  $A$, $g(\xi)$, $u(t)$, $\tau$, $n_G$, $n_I$
\Ensure $x^{(n_I)}_j\approx x(t_j)$ is a discrete-time approximation of the $\tau$-periodic solution $x(t)$ of system~\eqref{sys_switchings} after $n_I$ iterations
\State $\Delta t \gets \tau/n_G$ \Comment{Time step size}
\For{$j=0$ to $n_G$}
\State $t_j \gets j * \Delta t$
\State $x_j^{(0)} \gets 0_n$ \Comment{$0_n$ is the $n$-dimensional column vector of zeros}
\State $E_j \gets e^{t_j A}$  \Comment{Matrix exponentials}
\State $E_{-j} \gets e^{-t_j A}$
\EndFor
\State $M_0 \gets (E_{-n_G}-I)^{-1}$ \Comment{The inverse of $e^{-\tau A}-I$}
\For{$k=1$ to $n_I$} \Comment{$k$ is the iteration number}
\State $S_0 \gets 0_n$
    \For{$j=1$ to $n_G$} \Comment{$S_j \approx \int_0^{t_j} e^{-sA}(u(s)+g(x^{(k-1)}(s))) ds$}
        \State $S_j \gets S_{j-1} + \Delta t * E_{1-j}*(u(t_{j-1})+g(x^{(k-1)}_{j-1}))$
    \EndFor
    \State $c \gets M_0 * S_{n_G}$ \Comment{The approximation of $c$ in~\eqref{c_fun}}
    \For{$j=0$ to $n_G$} \Comment{The approximation of $x^{(k)}={\cal F} x^{(k-1)}$ at $t=t_j$}
        \State $x_j^{(k)} \gets E_j*(c+S_j)$
    \EndFor
\EndFor
\end{algorithmic}
\end{algorithm}

The modified Newton's method, defined by the sequence~\eqref{newton_modified}, is implemented in Algorithm~\ref{alg2}. {In both algorithms}, we choose the initial approximation $x^{(0)}(t)\equiv 0$. In this case, the variational system of equations~\eqref{Phi_x} becomes autonomous, and the corresponding fundamental matrix $\Phi_{x^{(0)}}(t)$ is efficiently represented by the matrix exponential:
$
\Phi_{x^{(0)}}(t) = e^{t(A + g'(0))}$ for all $t\in{\mathbb R}$.
The above representation allows us to evaluate the linear operator $[P'(x^{(0)})]^{-1}$ using the construction from Lemma~\ref{lemma_Pinv}.

\begin{algorithm}
\caption{Modified Newton's method}\label{alg2}
\begin{algorithmic}
\Require  $A$, $g(\xi)$, $u(t)$, $\tau$, $n_G$, $n_I$
\Ensure $x^{(n_I)}_j\approx x(t_j)$ is the $n_I$-th iteration of the modified Newton's method~\eqref{newton_modified}
\State $\Delta t \gets \tau/n_G$, $t_0 \gets 0$ \Comment{Time step and initial time}
\State $x_0^{(0)} \gets 0_n$ \Comment{Initial approximation: $0_n$ ($n$-dimensional column of zeros)}
\State $G_0 \gets g'(0)$ \Comment{The Jacobian matrix of $g(\xi)$ at zero}
\State $M_x \gets 0_{n\times n}$ \Comment{Zero matrix of size $n\times n$}
\State $E_0 \gets I$, $\Psi_0 \gets I$, $\varphi_0\gets G_0$, $Eg_0 \gets G_0$\Comment{$I$ is the identity matrix of size $n\times n$}
\For{$j=1$ to $n_G$}
\State $t_j \gets j * \Delta t$, $x_j^{(0)} \gets 0_n$ \Comment{Time discretization and the initial approximation}
\State $E_j \gets e^{t_j A}$, $E_{-j} \gets e^{-t_j A}$  \Comment{Matrix exponentials}
\State $\Psi_j \gets e^{t_j (A+G_0)}$, $\varphi_j \gets \Psi_j * G_0$, $Eg_j \gets E_j * G_0$
\State $\Psi_{-j} \gets e^{-t_j (A+G_0)}$, $\varphi_{-j} \gets \Psi_{-j} * G_0$, $Eg_{-j} \gets E_{-j} * G_0$
\State $M_x \gets M_x + Eg_{1-j} * \Psi_{j-1}$
\EndFor
\State $M_0 \gets (E_{-n_G}-I)^{-1}$ \Comment{The inverse of $e^{-\tau A}-I$}
\State $M_x \gets \Delta t* M_0 * M_x - I$, $M\gets M_x^{-1} * M_0$ \Comment{$M_x$ in~\eqref{M_x} and the product of matrices for~\eqref{C_form}}
\For{$k=1$ to $n_I$} \Comment{$k$ is the iteration number}
    \State $S_0 \gets 0_n$
    \For{$j=1$ to $n_G$} \Comment{$S_j \approx \int_0^{t_j} e^{-sA}(u(s)+g(x^{(k-1)}(s))) ds$}
        \State $S_j \gets S_{j-1} + \Delta t * E_{1-j}*(u(t_{j-1})+g(x^{(k-1)}_{j-1}))$
    \EndFor
    \State $c \gets M_0 * S_{n_G}$ \Comment{The approximation of $c$ in~\eqref{c_fun}}
    \For{$j=0$ to $n_G$} \Comment{$dy_j\approx P(x^{(k-1)} )(t_j)$ in~\eqref{p_map}}
        \State $dy_j \gets E_j*(c+S_j) - x^{(k-1)}_j$
    \EndFor
    \State $S_0 \gets 0_n$, $Cs \gets 0_n$
    \Comment{Compute the action of $[P'(x^{(0)})]^{-1}$ on $P(x^{(k-1)} )$}
    \For{$j=1$ to $n_G$}
     \State $S_j \gets S_{j-1} + \Delta t * \varphi_{1-j}* dy_{j-1}$
       \Comment{$S_j\approx \int_0^{t_j} \Phi_x^{-1}(s)g'(0)dy(s) ds$ in~\eqref{C_form}}
     \State $Cs \gets Cs+ \Delta t * Eg_{1-j}* (dy_{j-1} + \Psi_{j-1} * S_{j-1})$
    \EndFor \Comment{$Cs\approx \int_0^\tau e^{-tA} g'(0) \left\{ d y(t) + \Phi_x(t)\int_0^t \Phi_x^{-1}(s) g'(0) d y(s) ds \right\} dt$ in~\eqref{C_form}}
    \State $C_x \gets M * Cs$ \Comment{$C_x$ in~\eqref{C_form}}
    \For{$j=0$ to $n_G$}
       \State $x_j^{(k)} \gets x_j^{(k-1)} +dy_{j} - \Psi_j * (C_x - S_j)$
    \EndFor \Comment{$x_j^{(k)}$ is the discrete-time approximation of $x^{(k)}(t_j)$ in~\eqref{newton_modified}}
\EndFor
\end{algorithmic}
\end{algorithm}

Algorithms~\ref{alg} and~\ref{alg2} have been implemented in Maple~2020 with the use of the {\texttt MatrixExponential} function to evaluate $e^{t A}$.
We approximate the corresponding integrals in~\eqref{dx}--\eqref{M_x} by the rectangle quadrature rule.
In the simulations, we define the function $u(t)$ to be constant on each subinterval of the partition
$
0=\tau_0 < \tau_1 < \cdots <\tau_N = \tau
$,
i.e.,
\begin{equation}\label{u_bangbang}
u(t) = \sum_{i=1}^N u^{(i)}\chi_{[\tau_{i-1},\tau_i)}(t),\; u(\tau)=u^{(N)},
\end{equation}
where $\chi_{[\tau_{i-1},\tau_i)}(t)$ is the indicator function of $[\tau_{i-1},\tau_i)$.
The function $u(t)$ in~\eqref{u_bangbang} corresponds to a family of bang-bang controls with the values $u^{(i)}\in \partial U$ at the boundary of $U$.
\begin{figure}[thpb]
	\centering
		\includegraphics[width=0.9\linewidth]{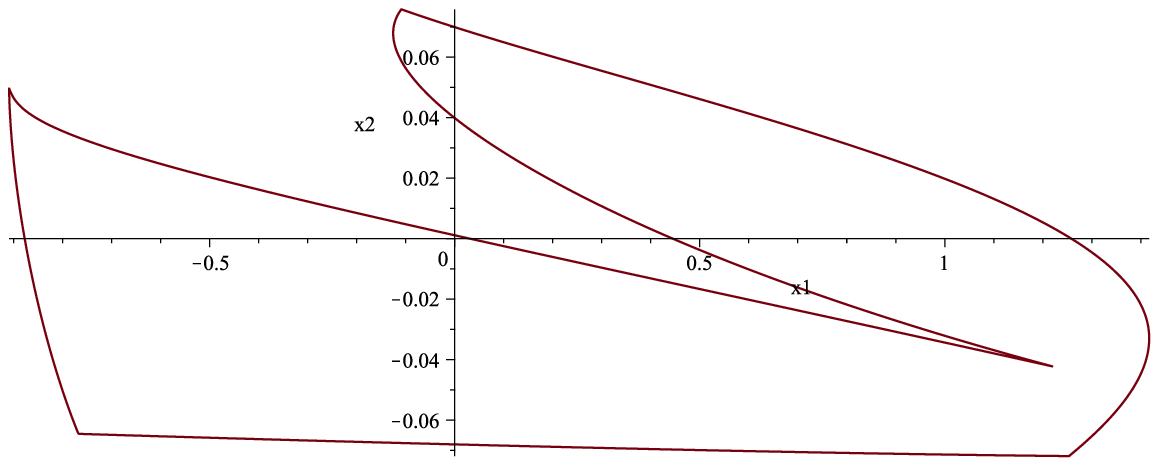}
		\caption{Periodic trajectory of~\eqref{sys_chem} with $u(t)$ of the form~\eqref{u_bangbang}--\eqref{tau_5}, $\tau=10$.}
	\label{Fig_N5_10}
\end{figure}

To illustrate the behavior of periodic solutions of system~\eqref{sys_chem} with discontinuous $u(t)$ of the form~\eqref{u_bangbang}, we fix $N=5$ and consider the switching sequence
\begin{equation}
u^{(1)} = u^{(3)}=-u^{(4)}=\begin{pmatrix}u_1^{max} \\ u_2^{min}\end{pmatrix},\;
u^{(2)}=-u^{(5)}=\begin{pmatrix}u_1^{max} \\ u_2^{max}\end{pmatrix}
\label{switching_5}
\end{equation}
together with the following switching time parameterization:
\begin{equation}
\small
\tau_0=0,\, \tau_1 = 0.1\tau,\,\tau_2 = 0.3\tau,\, \tau_3 = 0.5\tau,\, \tau_4 = 0.8\tau,\,\ \tau_5=\tau.
\label{tau_5}
\end{equation}
Algorithms~\ref{alg} and~\ref{alg2} have been executed for system~\eqref{sys_chem} with the above choice of $u(t)$ on a grid of size $n_G=10^5$ and with the number of iterations $n_I=9$.
For the simulations, we have used Maple~2020 software running on a laptop with an Intel Core i7 processor.
The results of Algorithm~\ref{alg2}
are shown in Fig.~\ref{Fig_N5_10} as the plot of  $x^{(n_I)}_j=(x^{(n_I)}_{j1},x^{(n_I)}_{j2})^\top$, $j=\overline{0,n_G}$.

We check the accuracy of Algorithms~\ref{alg} and~\ref{alg2} by computing the residuals
based on the discrete-time version of the governing integral equation~\eqref{intexpA}:
\begin{equation}\label{discr}
\small
d^{(k)} = \max_{0< j\le n_G} \left\| x^{(k)}_j - e^{t_j A}x^{(k)}_0 - \Delta t \sum_{i=0}^{j-1} e^{(t_j-t_i) A} \left( g( x^{(k)}_i ) + u(t_i) \right) \right\|,
\end{equation}
where $k$ is the iteration number.
The above $d^{(k)}$ measures the difference between the finite-difference approximation of $x(t)$
and the right-hand side of the integral equation~\eqref{intexpA} evaluated at the grid points. To check that the constructed solutions satisfy the periodic boundary conditions, we also evaluate $\|x^{(k)}(0)-x^{(k}(\tau)\|\approx \|x^{(k)}_0 - x^{(k)}_{n_G}\|$ at each iteration.
The simulation results are summarized in Table~1.

As shown in Table~1, the resulting residuals (which measure the approximation error) decrease monotonically as the iteration number $k$ increases.
The modified Newton's method (Algorithm~\ref{alg2}) ensures considerably faster convergence compared to the simple iteration method (Algorithm~\ref{alg}).
We see that all the approximate solutions are $\tau$-periodic, considering the numerical tolerance of the computations.
A significant reduction in residuals is already achieved in Algorithm~\ref{alg2} by iteration $k=8$. Since the rectangle quadrature rule is used to approximate the integrals with {\em merely measurable} input functions $u\in L^\infty[0,\tau]$, it is natural to expect that there will be no significant decay in residuals in subsequent iterations of the modified Newton's method, especially when the precision of numerical computations becomes dominant.

\begin{table}[h!]
\begin{center}
\begin{tabular}{|c|c|c|c|c|}
\hline
\multirow{2}{*}{} & \multicolumn{2}{c|}{\textbf{Algorithm 1}} & \multicolumn{2}{c|}{\textbf{Algorithm 2}}
\\
\hline
$\mathbf k$  &  $\mathbf{d^{(k)}}$ & $\mathbf{\|x^{(k)}_0 - x^{(k)}_{n_G}\|}$ &  $\mathbf{d^{(k)}}$ & $\mathbf{\|x^{(k)}_0 - x^{(k)}_{n_G}\|}$ \\ \hline
0 & 0.440438 & 0 & 0.440438& 0 \\ \hline
1 & 0.0650220 & $2.8319 \cdot 10^{-11}$ & 0.00569119& $2.1651 \cdot 10^{-15}$ \\ \hline
2 & 0.0102533 & $2.7931 \cdot 10^{-11}$ & 0.000180856& $2.2205 \cdot 10^{-15}$  \\ \hline
3 & 0.00301579 & $2.7289\cdot 10^{-11}$ & $3.22370 \cdot 10^{-6}$ &$2.1650 \cdot 10^{-15}$ \\ \hline
4 & 0.00173071 & $2.7452 \cdot 10^{-11}$ & $4.70956\cdot 10^{-8}$ & $2.1095 \cdot 10^{-15}$\\ \hline
5 & 0.00132163 & $2.7353\cdot 10^{-11}$ & $6.39264\cdot 10^{-10}$ & $2.1650 \cdot 10^{-15}$\\ \hline
6 & 0.00108846 & $2.7438 \cdot 10^{-11}$ & $6.64978\cdot 10^{-12}$ & $2.1095 \cdot 10^{-15}$\\ \hline
7 & 0.000886124 & $2.7368 \cdot 10^{-11}$ & $5.49621\cdot 10^{-14}$ &$2.1650 \cdot 10^{-15}$ \\ \hline
8 & 0.000721299 & $2.7425 \cdot 10^{-11}$ & $3.88675 \cdot 10^{-16}$ & $2.2205 \cdot 10^{-15}$ \\ \hline
9 & 0.000587331& $2.7378 \cdot 10^{-11}$ & $2.22214 \cdot 10^{-16}$ & $2.1650 \cdot 10^{-15}$ \\ \hline
\end{tabular}
\label{tab1}
\caption{Residuals $d^{(k)}$ for Algorithms~\ref{alg} and~\ref{alg2} with $n_G=10^5$ and $\tau=1$.}
\end{center}
\end{table}

\section{Conclusions and future work}\label{conc}
\vskip-1ex
The key contribution of this paper establishes existence and uniqueness conditions for solutions to nonlinear control systems of the form~\eqref{sys_switchings} with discontinuous input functions (of class $L^\infty$), general nonlinearities (of class $C^1$),
{and boundary conditions of the general affine form~\eqref{affine_BC}.}
Our crucial assumption in {Theorem~\ref{convtheorem}} is the dominant linearization condition {$\det ({\cal B}_\tau) \neq 0$, which, in the case of periodic boundary conditions, reduces to $\det \left(e^{-\tau A}-I\right)\neq 0$ and} relates the period $\tau$ to the spectral properties of the exponential of $A$.

It should be emphasized that sufficient convergence conditions
{are explicitly presented: for the simple iteration method (Theorem~\ref{convtheorem}, under general boundary conditions) and for Newton's method (Theorem~\ref{thm_NK}, under periodic boundary conditions).}
In Theorem~\ref{thm_NK}, the theoretical convergence rates are {analytically estimated} by formulas~\eqref{conv_classical} and~\eqref{conv_modified}.
Our case study justifies the applicability of the proposed iteration schemes to nonlinear control systems describing periodic non-isothermal chemical reactions {with arbitrary discontinuous inputs.
Due to the considerable practical importance of such reactions
in chemical engineering (as outlined in~\cite{felischak2021}), the proposed algorithms  hold substantial promise for further applications.
To our knowledge, no comparable concept based on a rigorous analytical approach
exists in the literature.}
%
%
The presented algorithms allow the use of general discontinuous functions $u(t)$ without any regularity assumptions except for integrability on $[0,\tau]$.
Because of this generality, no high-order quadrature formulas have been incorporated into Algorithms~1 and~2.
As a potential direction for improvement, adaptive quadrature rules can be applied to approximate integrals involving $u(t)$, taking into account the regularity of input signals on the intervals $(\tau_i,\tau_{i+1})$ within the context of the bang-bang parameterizations considered in Section~\ref{application}.
We consider the development of such adaptive schemes,
as well as the numerical implementation of Newton's iterations in the form of~\eqref{newton_iterations}
with an efficient inversion of $P'$, to be topics for future research.

 We also note that for problems with a higher number of state variables, the efficiency of the algorithms can be improved by modern techniques from numerical linear algebra that compute the action of matrix exponentials to vectors without explicitly forming the notoriously problematic matrix exponential explicitly. This will be explored when studying the application of our approach for more complex reaction networks.

\appendix
\section{Appendix}


{\em Proof of Lemma~\ref{lemma_Y}.}
We denote the matrix
$
B = \left(e^{-\tau A}-I\right)^{-1}\int_0^\tau e^{-tA} \left.\frac{\partial g(x)}{\partial x}\right\vert_{x(t)} \Phi_x(t) dt
$,
and use the Neumann series for the inverse of $M_x$ defined in~\eqref{M_x}:
\begin{equation}\label{M_x1}
M_x^{-1} = -(I-B)^{-1} = -\sum_{k=0}^\infty B^k.
\end{equation}
The above series converges under assumption $(A6)$:
$
\|B\|\le R_\tau ML \int_0^\tau e^{\omega t}\phi_L(t) dt = S<1
$,
where the constants $M$, $L$, $\omega$, $R_\tau$ are defined in $(A2)$ and~\eqref{RT},
and $\|\Phi_x(t)\|\le \phi_L(t) =\sqrt{n}e^{(\|A\|+L)t}$ is estimated by Lemma~\ref{lemma_X}.
Moreover,~\eqref{M_x1} together with the triangle inequality implies the assertion of Lemma~\ref{lemma_Y}:
$$
\|M_x^{-1}\| \le \sum_{k=0}^\infty \|B^k\| \le \sum_{k=0}^\infty S^k = \frac{1}{1-S}.
$$
$\square$

{\em Proof of Theorem~\ref{thm_NK}.}
The nonlinear operator $P:X_D\to X$ defined by~\eqref{p_map} is Fr\'echet differentiable at each $x\in X_D$,
and $\Gamma_0:=[P'(x^{(0)})]^{-1}\in {\mathcal L}(X,X)$ with
\begin{equation}\label{Gamma0}
\|\Gamma_0\|\le \rho_1
\end{equation}
due to Lemma~\ref{lemma_Pinv}.
We estimate $\|P(x)\|$ by applying the triangle inequality and H\"older's inequality to~\eqref{p_map},~\eqref{F_op},~and~{\eqref{c_fun_old}} under assumption~$(A2)$:
$$
\|P(x)\| \le \|x\| + Me^{\omega\tau} \|c(x)\| + \esssup_{s\in [0,\tau]}\|g(x(s))+u(s)\| \int_0^\tau \|e^{(\tau-s)A}\|ds
$$
and
$$
\|c(x)\| \le R_\tau \esssup_{s\in [0,\tau]}\|g(x(s))+u(s)\| \int_0^\tau \|e^{-s A}\|ds,
$$
where $R_\tau$ is given by~\eqref{RT}. The above inequalities and~$(A2)$ imply that
\begin{equation}\label{P0}
\|P(x)\| \le \rho_0(x,u)\quad \text{for all}\; x\in X_D,\;u\in L^\infty \left([0,\tau];{\mathbb R}^n\right),
\end{equation}
provided that $\rho_0(x,u)$ is defined in~\eqref{rho0}.

It remains to prove that the second derivative of $P(x)$ exists and to estimate the norm of $P''(x)$.
We recall that $P''(x):X\times X\to X$ at $x\in X_D$ is a bilinear operator~\cite[Ch.~XVII]{kantorovich1982functional}:
\begin{equation}\label{P2}
P''(x)(x^1,x^2) = \lim_{\varepsilon\to 0} \frac{1}{\varepsilon}\Bigl( P'(x+\varepsilon x^2)x^1 - P'(x)x^1\Bigr),
\end{equation}
provided that the above limit exists uniformly with respect to $x^1,x^2\in X$ with $\|x^1\|=\|x^2\|=1$.
As $P(x)={\cal F}(x)-x$, then $P''(x)={\cal F}''(x)$ if the latter second derivative exists.
Given $x\in X_D$, $x^1,x^2\in X$, the value of $P''(x)(x^1,x^2)$ at $t\in [0,\tau]$ is thus defined
by exploiting~\eqref{dF},~{\eqref{dc_old}}, and~\eqref{P2} as follows:
\begin{equation}\label{P2t}
\begin{aligned}
& P''(x)(x^1,x^2)(t) = \lim_{\varepsilon\to 0} \frac{1}{\varepsilon}\Bigl( e^{tA} \left( c'_{x+\varepsilon x^2} (x^1)- c'_{x} (x^1) \right) \\
&+\int_0^t e^{(t-s)A} \left[ g'(x(s)+\varepsilon x^2(s)) - g'(x(s)) \right] x^1(s) ds\Bigr).
\end{aligned}
\end{equation}
Since $g\in C^2(D;{\mathbb R}^2)$, the second derivative of $g$ at any $\xi \in D$ is a bounded bilinear operator $g''(\xi):{\mathbb R}^n\times {\mathbb R}^n \to \mathbb R^n$, and the limit in~\eqref{P2t} is well-defined. The components of the column vector $g''(\xi)(\xi^1,\xi^2)$ for given $\xi^1=(\xi^1_1,...,\xi^1_n)^\top$ and
$\xi^2=(\xi^2_1,...,\xi^2_n)^\top$ are:
$
g''(\xi)(\xi^1,\xi^2)_k = \sum_{i,j=1}^n \frac{\partial^2 g_k(\xi)}{\partial \xi_i \partial \xi_j} \xi^1_i \xi^2_j$, $k=1,2,...,\, n$.
Assumption~$(A6)$, together with the Cauchy--Schwarz inequality, implies that
\begin{equation}\label{g2_norm}
\|g''(\xi)(\xi^1,\xi^2)\| \le \sqrt{n} \bar H \|\xi^1\|\cdot\|\xi_2\|\quad \text{for all}\; \xi^1,\xi^2\in {\mathbb R}^n.
\end{equation}
Then, the limit in~\eqref{P2t} takes the following form with $g''$:
\begin{equation}\label{P2g2}
\begin{aligned}
&P''(x)(x^1,x^2)(t) =  \int_0^t e^{(t-s)A} g''(x(s)) (x^1(s),x^2(s)) \, ds \\
&+e^{tA} (e^{-\tau A}-I)^{-1}\int_0^\tau e^{-sA} g''(x(s)) (x^1(s),x^2(s)) \, ds.
\end{aligned}
\end{equation}
Formula~\eqref{P2g2} defines the bounded bilinear operator $P''(x):X\times X \to X$ for any $x\in X_D$.
Indeed, to show the boundedness of $P''(x)$, we estimate the norm of $P''(x)(x^1,x^2)$ in~\eqref{P2g2} using the triangle inequality, H\"older's inequality, and estimate~\eqref{g2_norm}:
$$
\begin{aligned}
\|P''(x)(x^1,x^2)\| &\le \sup_{s\in [0,\tau]} \|g''(x(s)) (x^1(s),x^2(s))\|
( M R_\tau e^{\omega \tau} \int_0^\tau \|e^{-sA}\|ds+ \int_0^\tau \|e^{(\tau-s)A}\|ds ) \\
&\le \rho_2 \|x^1\|\cdot \|x^2\|\;\;\text{for all}\;\;x^1,x^2\in X_D,
\end{aligned}
$$
where $\rho_2$ is defined by~\eqref{rho2}. The above inequality means that
\begin{equation}\label{P2norm}
\|P''(x)\| \le \rho_2\quad\text{for all}\; x\in X_D.
\end{equation}

Given $x^{(0)}\in X_D$ and $u\in L^\infty \left( [0,\tau]; {\mathbb R}^n \right)$, conditions~\eqref{Gamma0},~\eqref{P0},~\eqref{P2norm},~\eqref{h}, and~\eqref{r0} imply that the assumptions of the Newton--Kantorovich theorem~\cite[Ch.~XVIII, Theorem~6]{kantorovich1982functional} (see also \cite[Theorem~1.1]{fernandez2020mild}) are satisfied with:
$
\|\Gamma_0 P(x^{(0)})\|\le \eta = \rho_1 \rho_0(x^{(0)},u)$, $\|\Gamma_0 P''(x))\|\le K = \rho_1 \rho_2$ {for all} $x\in D$, and
$
h = K\eta = \rho_0(x^{(0)},u) \rho_1^2 \rho_2 \le \frac12
$.
Then, the assertions of Theorem~\ref{thm_NK} follow from the Newton--Kantorovich theorem.
$\square$

\newpage

\end{document}